\newtheorem{theorem}{Theorem}[section]
\newtheorem{lemma}[theorem]{Lemma}
\newtheorem{proposition}[theorem]{Proposition}
\newtheorem{corollary}[theorem]{Corollary}
\newtheorem{remark}[theorem]{Remark}
\newtheorem{definition}{Definition}[section]
\newcommand{\bb}[1]{{\mathbb #1}}
\newcommand{\eps}{\varepsilon}
\newcommand{\ve}{\varepsilon}
\newcommand{\p}{\partial}
\newcommand{\pfrac}[2]{\genfrac{}{}{}{1}{#1}{#2}}
\def\P{{\mathbb P}}
\def\R{{\mathbb R}}
\def\T{{\mathbb{T}}}
\def\ve{{\varepsilon}}
\def\T{\bb T}
\def\Tm{T_{\rm max}}
\def\TmN{T^N_{\rm max}}
\title{{\sc A particle system with explosions: law of large numbers\\ for the density of particles and the blow-up time.}}
\keywords{Hydrodynamic limit, Parabolic equations, blow-up}
\subjclass[2010]{60K35, 35K55, 35B40}
\date{}
\author{Tertuliano Franco}
\address{Universidade Federal da Bahia, Salvador, Brazil}
\email{tertu@impa.br}
\author{Pablo Groisman}
\address{Departamento de Matem\'atica, Fac. Cs. Exactas y Naturales,
UBA and IMAS-CONICET, Buenos Aires, Argentina}
\email{pgroisma@dm.uba.ar}
\begin{document}

\begin{abstract}
Consider a system of independent random walks in the discrete torus with
creation-annihilation of particles and possible explosion of the
total number of particles in finite time. 
Rescaling space and rates for diffusion/creation/annihilation of particles, we
obtain a stong law of large numbers for the density of particles in the
supremum norm. The limiting object is a classical solution to the semilinear
heat equation $\p_t u=\p_{xx} u + f(u)$. If $f(u)=u^p$, $1<p\leq 3$,  we also obtain a law of large numbers for the explosion time.
\end{abstract}

\maketitle

\section{Introduction}
\setcounter{equation}{0}

We consider nearest-neighbors symmetric independent random walks
superposed with birth and death dynamics in the discrete torus. 
At rate one, each particle jumps to one of its neighbors with the same
probability. In addition,  if at a site there are $r$ particles, at that site a
new particle is created at rate $b(r)$ and a
particle is destroyed at rate $d(r)$.

We study suitable scaled versions of this process, that (as will be shown) converge almost surely
in the $L^\infty$-norm to the solution of the semilinear parabolic problem
\begin{equation}
\label{PDE}
\left\{\begin{array}{ll}
u_t = u_{xx} + f(u)  \qquad & (x,t) \in \bb T \times [0,T),\\
 u(x,0) = \varphi (x) \ge 0 \qquad & x \in \bb T\,. \\
\end{array}\right.
\end{equation}
where $f=b-d$ is assumed to be smooth, $\varphi$ is smooth and nonnegative, and $\bb T$ denotes the continuous one-dimensional torus.
This equation has been widely studied in the literature, being used to
model
diverse processes in mechanics, physics, chemistry, technology, biology and
many other areas. For instance, under certain conditions, it describes
conduction in plasma, 
gas filtration and liquids in porous media, chemical reactions, processes of growth and migration of populations, etc.

One of the most remarkable properties of this equation is the possible
occurrence of singularities due to the presence of the nonlinear source $f$.
Even starting from regular data, for which there exist an existence, 
uniqueness and continuous dependence theory for short times, the solution may
develop singularities in finite time. Although for linear evolution problems
singularities may occur, they appear due to singularities in the coefficients or
in the problem data, while in this case, singularities appear because of the
nonlinear essence of the equation, and the time and space localization of them has to
be determined through a careful analysis.

In this problem, singularities appear in the simplest way: they are due to a
fast increasing of the solution that leads the $L^\infty$ norm to grow-up to
infinity in a finite time $T_{\textrm{max}} = T_{\textrm{max}}(\varphi)$.

The phenomenon is known as {\em blow-up}, and is interpreted as an
abrupt change in the order of magnitude in the modeled quantity. It was
successfully used to model, for instance, explosions in exothermic chemical
reactions, population dynamics, fatigue cracking (in this case explosion means
that a microscopic crack changes its scale and becomes  macroscopic,
indicating a  crack in the material due to fatigue).

In view of this, it is important to understand the microscopic behavior of this
kind of systems, and in particular their singularities (presence of them,
space-time location, order, etc.).

A well known condition on the nonlinear term $f$ that assures the existence of
solutions with blow-up is being convex, strictly positive in some interval $[a,
+\infty)$ and 
\begin{equation}\label{blow.up.condition}
\int^\infty_a \frac{ds}{f(s)} < \infty\,.
\end{equation}

The most simple source verifying this conditions is $f(s)=s^{p}$,
$p >1$. For a general description of the blow-up problem we refer the
reader to the books \cite{QS, SGKM}
and the surveys \cite{BB,GV}.

Coming back to the Markov chain (the particle system), the scaling here is in
the same spirit of \cite{BlouComp1991,BlouLawo1992,KoteLawo1986, KoteHigh1988}. 
The initial quantity of particles per site is also rescaled, different than in
the hydrodynamic limit context \cite{KipnScal1999}. 

As main results, we obtain  almost sure convergence for the density of particles
in the supremum norm for any compact time interval not
containing $T_{\textrm{max}}$.

This result was obtained by Blount, \cite{BlouLawo1992} for
$f$ a polynomial with negative leading term. In that case solutions are known to be bounded for every $t\ge 0$ and hence globally defined, see also \cite{AT, BlouComp1991,
KoteLawo1986, KoteHigh1988}.

The first part of the proof consists in proving the result for birth rates $b$ with compact support, where there is no blow-up and the
solution is bounded. This follows essentially the
work of Blount \cite{BlouLawo1992}, the main difference being that we consider
any continuously differentiable $f$ instead of polynomials. In the second part of the proof,
we couple a sequence of Markov chains as the one described above, with birth rates
$b_j$, where $(b_j)_{j\ge 1}$ is a sequence of smooth functions
with compact support approximating $b$. This coupling allows us to extend the
result proved in the first part to any smooth $b$, including those one
satisfying \eqref{blow.up.condition}.

As an immediate corollary, the liminf  of the explosion times of the
discrete systems is no smaller than the blow-up time of the solution to the PDE
\eqref{PDE}. The opposite inequality  is much harder and we are only able to
obtain it in some specific cases.
Assuming that $f(s)$ verifies \eqref{blow.up.condition} and
that $d$ is a bounded or linear function, we prove
that, for each $N\ge 1$, the corresponding particle system explodes with
probability one. Under the additional condition $f(s)=b(s)-d(s)=s^p$, with
$1 < p \leq 3$ we also prove that the explosion time of the particle system
converges in probability, as $N\to \infty$, to the
blow-up time of the solution to the PDE. 
 
We remark here that if $f$ is globally Lipschitz then there is no explosion
with probability one.

To the best of our knowledge, there is no previous work about the limit density of
interacting particle systems exhibiting blow-up. For instance, 
\cite{BlouComp1991,BlouLawo1992,KoteLawo1986, KoteHigh1988} in this type of
scaling, and \cite{Mourragui} in the hydrodynamical limit context, have considered creation
of particles, in some cases with unbounded limit, but all of them deal with processes defined for all times.

It is worth to notice that since the particle system explodes in finite
time, the expectation of the number of particles is infinity at any positive
time. Hence, any method based on expectations is doomed to fail. This motivates
the use of couplings.

\section{Notation and results}\label{s.2}

\subsection{The particle system} Denote $\T_N=\bb Z/(N\bb Z)$ the discrete torus with $N$ points.
Fix two nonnegative smooth functions $b,d:\bb R_+\to\bb R_+$ such that $d(0)=0$.  Consider also a parameter $\ell\in \bb N$, which will represent
the number of particles per site in the initial configuration. 
We characterize the  continuous time Markov chain 
$$(\eta(t))_{t\geq 0}=(\eta_1(t),\ldots,\eta_N(t))_{t\geq 0}$$
 with state space $\Omega_N= \bb N^{\T_N}\cup \{\infty\}$ by its jump rates given by 
\begin{enumerate}
 \item[\textbullet] at rate $N^2\eta_k$, a particle jumps from $k$ to $k+1$; 
 \item[\textbullet] at rate $N^2\eta_k$, a particle jumps from $k$ to $k-1$; 
\item[\textbullet] at rate $\ell b(\ell^{-1}\eta_k)$, a new particle is created at $k$; 
\item[\textbullet] at rate $\ell d(\ell^{-1}\eta_k)$, a particle is destroyed at $k$. 
\end{enumerate}
The transitions above are assumed for all $k\in \T_N$.
Aiming not carry on the notation, we do not index $\eta(t)$ on $N$ and on $\ell$. Since there are no assumptions on the behavior of $b$ at infinity, the waiting times of this Markov chain can be summable. If that is the case we say that the process \emph{explodes} or \emph{blows up}, and we define the state of the process as $\infty$ for times greater or equal than the sum of the waiting times, that we call $\TmN$. More precisely, define
\[
 T^N_M:=\inf\{t\ge 0\colon \|\eta(t)\|_\infty\ge \ell M \} \mbox{ and }
\TmN:=\lim_{M\to \infty}  T^N_M\,.
 \]
Hence we can easily define $\eta(t)$ for $t<\TmN$ and we define $\eta(t)=\infty$ for $t\ge \TmN$. A graphical construction of this process is given in Section \ref{graphical.construction}. For more on explosions of Markov chains we refer to \cite{Norris}. 

Next, we define the spatial density of particles $X^N$, which is a function defined on the continuum torus $\bb T=[0,1]$, identifying $0$ and $1$ and considering a fixed  orientation. 
 
For $k\in \T_N$, let $x_k=k/N$ and define
\begin{equation*}
 X^N(x_k,t) \;=\; \ell^{-1} \eta_k(t)\,.
\end{equation*}
Also for $x_k<x<x_{k+1}$, we define the density of particles
by linear interpolation, i.e.
\begin{equation*}
 X^N(x,t) \;=\; (Nx-k) X^N(x_{k+1},t) + (k+1-Nx) X^N(x_k,t)\,.
\end{equation*}
If $\eta(t)=\infty$, we say that $\Vert X^N(\cdot, t)\Vert_\infty=\infty$ as well.
We point out that this interpolation has no special meaning. Using instead a smoother interpolation or even defining $X^N$ as a step function would not change the results.

\subsection{The partial differential equation}\label{sub2.2} We make the
following assumptions on the data of problem \eqref{PDE}:

\begin{enumerate}
\item[\textbullet]\label{a2} The initial datum $\varphi$ is $C^4$
and nonnegative.
\item[\textbullet]\label{a3} The source term $f$ is $C^1$ and $f(0)\ge0$.
\end{enumerate}
Under the above assumptions this equation has a unique (local) solution $u$,
which is smooth in some interval $(0,T_{\textrm{max}})$. Here $T_{\textrm{max}}$
is the maximal existence time. If $f$ is globally Lipschitz then
$T_{\textrm{max}}=+\infty$
(global existence) but it can be proved (see Theorem \ref{explosion_PDE} in Section \ref{asymp.behavior}) that if $f$ is convex and verifies
\eqref{blow.up.condition} then, for positive $\varphi$, the solution blows up in finite
time, meaning that it is smooth in $(0,T_{\textrm{max}})$, but
$$
\lim_{t\nearrow T_{\textrm{max}}} \|u(\cdot,t)\|_{\infty} = \infty\,.
$$
For general references on the study of this equation, existence, uniqueness and
asymptotic behavior (including the blow-up case) see the books \cite{Pao, QS, SGKM, JLV}.

The partial differential equation \eqref{PDE} satisfies a comparison principle\footnote{This is clarified in Section \ref{s.3}.}. Since we require $f(0)\ge0$, $u\equiv 0$ is a sub-solution to this problem and hence for nonnegative initial data, the solution is positive.
All the regularity assumptions on the data of the problem are to guarantee the smoothness of the solution. They can be relaxed to some extent, but since we are not focused on the 
problems arising due to the the lack of regularity of the solutions, we prefer not to include them to simplify the exposition.
\medskip

We are in position to state the main results of this paper.

\begin{theorem}\label{t2.1}
 Assume that
\begin{enumerate}
 \item[(A1)] $\Vert X^N(\cdot,0)-\varphi(\cdot)\Vert_\infty \to 0$ almost
surely;
 \item[(A2)] for any  $c>0$, $\ell=\ell(N)$ satisfies $\sum_{N\geq 0} N^3
e^{-c\, \ell}<\infty\,$.
\end{enumerate}
Then, for any $T\in [0,\Tm)$,
\begin{equation}\label{eq2}
\lim_{N\to\infty} \sup_{t\in[0,T]} \Vert  X^N(\cdot,t)-u(\cdot,t)\Vert_\infty\;=\; 0\,,\quad \textrm{ almost surely.}
\end{equation}
\end{theorem}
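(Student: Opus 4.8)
The plan is to follow the two-stage strategy announced in the introduction. In the first stage I would prove the statement under the extra assumption that the birth rate $b$ has compact support, so that $f$ is globally Lipschitz, the PDE solution $u$ is bounded on $[0,T]$, and the Markov chain does not explode. Fix $M$ larger than $2\sup_{[0,T]}\|u(\cdot,t)\|_\infty$ and work on the event $\{T^N_M>T\}$, i.e. until the discrete density exceeds level $M$; I will show this event has probability tending to one and in fact the complement is summable in $N$. On this event, I would write a discrete Duhamel (mild) formula for $X^N$ using the semigroup $P^N_t$ of the rescaled discrete Laplacian $N^2\Delta_N$: schematically $X^N(\cdot,t)=P^N_t X^N(\cdot,0)+\int_0^t P^N_{t-s}\big(f(X^N(\cdot,s))+\text{noise}\big)\,ds$, where the ``noise'' collects the compensated (martingale) parts of the jump, birth and death Poisson processes. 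The key probabilistic estimate is a concentration bound: since $\ell$ particles sit at each site initially and rates are $O(N^2)$ or $O(\ell)$, the relevant martingales have quadratic variation $O(\ell^{-1})$ per unit time, so exponential (Bernstein/Freedman-type) inequalities give $\P(\sup_{t\le T}\|(\text{noise integral})(\cdot,t)\|_\infty>\delta)\le C N \exp(-c\ell\delta^2)$ for each site, and a union bound over the $N$ sites and over a mesh of times costs only polynomial factors in $N$. Assumption (A2), $\sum_N N^3 e^{-c\ell}<\infty$, is exactly tailored to make these bounds summable, so Borel–Cantelli upgrades convergence in probability to almost sure convergence.

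With the noise controlled, the deterministic part of the argument is a discrete Gronwall estimate comparing $X^N$ to $u$. One compares $X^N$ with the solution $u^N$ of the PDE sampled on the lattice, or directly with $u$: the difference satisfies $e^N(t):=\|X^N(\cdot,t)-u(\cdot,t)\|_\infty \le \|X^N(\cdot,0)-\varphi\|_\infty + (\text{noise})+ \int_0^t \|P^N_{t-s}f(X^N)-S_{t-s}f(u)\|_\infty\,ds$. This splits into (i) the error between the discrete and continuous heat semigroups applied to a fixed smooth function — controlled using the $C^4$ regularity of $\varphi$ and the smoothness of $u$ on $[0,T]$, giving an $O(N^{-2})$ (or at worst $O(N^{-1})$ with the crude interpolation) deterministic error; (ii) the Lipschitz term $\int_0^t \|P^N_{t-s}\|_{\infty\to\infty}\,\mathrm{Lip}(f)\,e^N(s)\,ds \le \mathrm{Lip}(f)\int_0^t e^N(s)\,ds$, using that $P^N_t$ is a contraction on $L^\infty$. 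Gronwall then gives $e^N(T)\le \big(\|X^N(\cdot,0)-\varphi\|_\infty+\text{noise}+O(N^{-1})\big)e^{\mathrm{Lip}(f)T}\to 0$ a.s. by (A1) and the concentration bound. Finally I would verify a posteriori that on this event $\|X^N(\cdot,t)\|_\infty$ stays below $M$ for all $t\le T$ (since it is close to $u$), closing the bootstrap and confirming $\P(T^N_M\le T)$ is summable.

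The second stage removes the compact-support hypothesis on $b$ by a coupling/monotonicity argument. Let $b_j\nearrow b$ be smooth with compact support, and let $\eta^{(j)}$ be the chain with birth rate $b_j$ (coupled to $\eta$ via the graphical construction of Section~\ref{graphical.construction}, so that $\eta^{(j)}(t)\le \eta(t)$ while both are defined) and let $u_j$ be the corresponding PDE solution with source $f_j=b_j-d$. By stage one, $X^{N,j}\to u_j$ a.s. uniformly on $[0,T]$ for every $j$, for any $T<T_{\max}(f_j)$; and by the comparison principle for \eqref{PDE}, $u_j\nearrow u$ with $T_{\max}(f_j)\to T_{\max}=T_{\max}(f)$, so for fixed $T<T_{\max}$ one has $T<T_{\max}(f_j)$ for $j$ large and $\|u-u_j\|_{L^\infty([0,T]\times\bb T)}\to 0$. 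The lower bound $X^N\ge X^{N,j}$ plus $X^{N,j}\to u_j\approx u$ gives the ``liminf'' half; for the matching upper bound one needs to show $X^N$ does not run away above $u$ before time $T$. The main obstacle is exactly here: because the true chain can explode and has infinite expectation, one cannot control $X^N$ from above by a direct moment computation. The way around it is to run the stage-one bootstrap for the true chain $\eta$ with the cutoff level $M$: up to time $T^N_M$ the dynamics of $\eta$ and of $\eta^{(j)}$ agree for $j$ large enough that $b_j=b$ on $[0,M]$, so the already-established estimate for $X^{N,j}$ transfers verbatim to $X^N$ on $\{t\le T^N_M\}$, and then the same a posteriori argument shows $\|X^N(\cdot,t)\|_\infty<\ell M$ throughout, i.e. $T^N_M>T$ with summable-complement probability. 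Combining both halves yields \eqref{eq2}. I expect the delicate points to be (a) getting the exponential concentration constants to line up with the precise form of (A2), and (b) the bookkeeping in the coupling that ensures ``$b_j=b$ below the cutoff $\Rightarrow$ the two chains are literally identical below the cutoff,'' which is what makes the reduction to the compact-support case legitimate despite the possibility of explosion.
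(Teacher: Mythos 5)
Your proposal is correct and follows essentially the same route as the paper: for compactly supported $b$ a concentration estimate for the martingale noise around the semidiscrete scheme (which the paper simply imports as Blount's bound $4N^3e^{-a\ve^2\ell}$, with (A2) feeding Borel--Cantelli) combined with Proposition \ref{convergence.semidiscrete}, and then truncation of $b$ above $\sup_{t\le T}\|u(\cdot,t)\|_\infty$ with the graphical coupling making the truncated and true chains identical until the cutoff level, which the compact-support result shows is never reached before $T$. The only superfluous element is the monotone coupling $X^{N,j}\le X^N$ (which would itself require justification but is never actually needed, since your closing identity-coupling plus a posteriori bound is exactly the paper's argument).
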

\begin{remark}
By {\rm A1}, the parameter $\ell$ represents the order of the initial quantity
of particles per site. Condition {\rm A2} states that the growth of $\ell$
cannot be too slow in comparison with $N$. For instance,  $\ell(N)=N^\eps$
satisfies {\rm A2} for any $\eps>0$.
\end{remark}
An immediate corollary of Theorem \ref{t2.1} is the following
\begin{corollary}
\label{cor.liminf}
If {\rm A1} and {\rm A2} hold, we have
\[
 \liminf_{N\to \infty} \TmN \ge \Tm\, \quad \mbox{almost surely}.
\]
\end{corollary}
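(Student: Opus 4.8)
The plan is to deduce Corollary~\ref{cor.liminf} directly from Theorem~\ref{t2.1} by a soft argument: if the discrete systems blew up before $\Tm$, the convergence in \eqref{eq2} would force the limiting PDE solution to blow up before $\Tm$ as well, contradicting that $u$ is smooth (in particular finite in sup-norm) on $[0,\Tm)$.

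More concretely, first I would argue by contradiction. Suppose the event $\{\liminf_{N\to\infty}\TmN<\Tm\}$ has positive probability. Then there is a deterministic time $T<\Tm$ and a set of positive probability on which $\TmN\le T$ for infinitely many $N$ (one can take $T$ rational and use countable subadditivity, so that the bad event is a countable union over such $T$ of events $\{\TmN\le T \text{ i.o.}\}$). On that event, for those infinitely many $N$ we have, by the definition of $\TmN$ as the explosion time and of $X^N$ at $\infty$, that $\sup_{t\in[0,T]}\|X^N(\cdot,t)\|_\infty=\infty$. Fix such a $T<\Tm$; since $u$ is smooth on $[0,\Tm)$, the constant $K:=\sup_{t\in[0,T]}\|u(\cdot,t)\|_\infty$ is finite. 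Then for those $N$,
\[
\sup_{t\in[0,T]}\|X^N(\cdot,t)-u(\cdot,t)\|_\infty \;=\; \infty \;>\; 1,
\]
so the quantity in \eqref{eq2} does not go to $0$ along that subsequence. This contradicts the almost sure convergence in Theorem~\ref{t2.1}, since A1 and A2 are assumed to hold. Hence the bad event has probability zero, and $\liminf_{N\to\infty}\TmN\ge\Tm$ almost surely.

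There is essentially no analytical obstacle here; the only points requiring a little care are bookkeeping ones. One should make sure the null sets match up: Theorem~\ref{t2.1} gives, for each fixed rational $T<\Tm$, a null set off which \eqref{eq2} holds; taking the countable union over such $T$ still gives a null set, and on its complement the above argument applies simultaneously for all rational $T<\Tm$, which is enough since $\Tm$ is approached by rationals from below. One should also note that $\TmN$ is well defined (possibly $+\infty$) by the monotone limit $\lim_{M\to\infty}T^N_M$ from Section~\ref{s.2}, and that $\|X^N(\cdot,t)\|_\infty=\infty$ precisely when $\eta(t)=\infty$, i.e. for $t\ge\TmN$, which is exactly what makes the supremum over $[0,T]$ infinite once $\TmN\le T$. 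With these remarks in place the corollary follows in a few lines.
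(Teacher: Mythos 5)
Your argument is correct and is essentially the paper's own proof: the paper simply notes that Theorem \ref{t2.1} applied on $[0,T]$ with $T=\Tm-\eps$ forces $\TmN>T$ for all large $N$ almost surely (since explosion before $T$ would make the supremum in \eqref{eq2} infinite), and then lets $\eps\searrow 0$ along a sequence. Your contradiction phrasing and the rational-$T$ null-set bookkeeping are just a more detailed write-up of the same idea.
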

The left hand side in the above equation can be infinity in general. Next we find conditions to guarantee that that is not the case.

\begin{proposition}
\label{bu.prob.1}
Let $b \in C^1$ be convex, positive and such that $\int_0^\infty 1/b(s)\, ds < +\infty$. Assume also that $d$ is bounded or linear, then for every $N$ we have
\[
 \P(\Tm^N < +\infty)=1\,.
\]
\end{proposition}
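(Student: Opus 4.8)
The plan is to reduce the statement to the behaviour of the total number of particles $S(t):=\sum_{k\in\T_N}\eta_k(t)$. Since $\|\eta(t)\|_\infty\le S(t)\le N\|\eta(t)\|_\infty$, the particle system explodes, $\TmN<+\infty$, precisely when $S$ reaches $+\infty$ in finite time, so it suffices to prove the latter. The diffusion part of the dynamics conserves $S$, which therefore increases by one at total rate $\beta(\eta):=\ell\sum_k b(\ell^{-1}\eta_k)$ and decreases by one at total rate $\gamma(\eta):=\ell\sum_k d(\ell^{-1}\eta_k)$. Two one-sided rate bounds are exactly what the hypotheses provide. By convexity of $b$ and Jensen's inequality, whenever $S=z$,
\[
\beta(\eta)\;=\;\ell N\cdot\frac1N\sum_k b(\ell^{-1}\eta_k)\;\ge\;\ell N\, b\!\left(\frac{z}{\ell N}\right)\;\ge\;\ell N\,\underline b\!\left(\frac{z}{\ell N}\right)\;=:\;g(z),
\]
where $\underline b(u):=\inf_{v\ge u}b(v)$ is the nondecreasing minorant of $b$. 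A convex positive function with $\int^\infty 1/b<+\infty$ is eventually nondecreasing and tends to $+\infty$, so the tail of $\underline b$ coincides with that of $b$, $\int_0^\infty dz/g(z)=\int_0^\infty du/\underline b(u)<+\infty$, and $g$ is nondecreasing with $g(z)/z\to\infty$ (for such a $b$ one has $b(u)/u\to\infty$, else $b$ would be dominated by an affine function and $\int^\infty 1/b$ would diverge). For the deaths, whenever $S=z$ we have $\gamma(\eta)\le\delta(z)$, where $\delta(z):=\ell N\|d\|_\infty$ in the bounded case and $\delta(z):=\kappa z$ in the linear case $d(s)=\kappa s$; in both cases $\delta(z)/g(z)\to0$.

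Next I would introduce the birth--death chain $Z=(Z(t))_{t\ge0}$ on $\bb N$ with birth rates $\lambda_z=g(z)$ and death rates $\mu_z=\delta(z)$ (so $\mu_0=0$), started at $Z(0)=0$, and couple it with $\eta$ — via a graphical construction as in Section~\ref{graphical.construction} — so that $Z(t)\le S(t)$ for every $t$ before explosion. This is the standard monotone coupling: on the event $\{Z=S\}$ the birth rate of $S$ dominates $\lambda_Z$ (because $\beta(\eta)\ge g(S)$) and the death rate of $S$ is dominated by $\mu_Z$ (because $\gamma(\eta)\le\delta(S)$), so births of $Z$ can be realized by thinning births of $S$ while deaths of $S$ force deaths of $Z$; as both processes move by unit steps, the order $Z\le S$ is preserved, while the marginal law of $Z$ is that of the birth--death chain above. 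Since $S$ has only finitely many jumps on every compact subinterval of $[0,\TmN)$ and $Z\le S$ there, if $Z$ explodes at a finite time then so does $S$ by that time; thus it remains to show that $Z$ explodes almost surely.

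For the explosion of $Z$ I would use the explicit expected passage times: if $e_j$ denotes the expected time for $Z$ to pass from $j$ to $j+1$, then $e_0=1/\lambda_0$ and $e_j=\lambda_j^{-1}+(\mu_j/\lambda_j)\,e_{j-1}$, and $\mathbb{E}_0[T^Z_{\rm max}]=\sum_{j\ge0}e_j$ (this is precisely Reuter's explosion series for a birth--death chain). Since $\delta/g\to0$ and $\lambda$ is nondecreasing, we have $\mu_j/\lambda_j\le\tfrac12$ for all $j$ large; unrolling the recursion from such a $j$ gives $e_j\le\sum_{i\ge0}2^{-i}\lambda_{j-i}^{-1}$ up to a geometrically small remainder, and after summing in $j$ and interchanging the order of summation the only series that survives is a constant multiple of $\sum_m g(m)^{-1}$, which is finite since $\int_0^\infty dz/g(z)<+\infty$ and $g$ is eventually nondecreasing. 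Hence $\mathbb{E}_0[T^Z_{\rm max}]<+\infty$, so $Z$ explodes almost surely, and consequently $\P(\TmN<+\infty)=1$.

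I expect the main obstacle to be the bookkeeping in the coupling step: one must work with the total mass $S$, whose jump rates are functionals of the whole configuration, so the argument hinges on the two rate bounds being one-sided in exactly the directions needed for stochastic domination from below — convexity of $b$ giving the lower bound on the birth rate, and the ``bounded or linear'' form of $d$ giving the upper bound on the death rate, together with the gain $\delta/g\to0$ that makes $Z$ explosive. This is precisely where the hypotheses on $b$ and $d$ are used; the remaining ingredients are either standard (the monotone coupling, the reduction to $S$, the passage-time recursion) or may be quoted (Reuter's criterion in place of the final computation).
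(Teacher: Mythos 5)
Your proposal is correct and follows essentially the same route as the paper: a Jensen/convexity lower bound on the aggregate birth rate and a bounded-or-linear upper bound on the aggregate death rate, a coupling that dominates the total particle number from below by a one-dimensional birth--death chain, and finiteness of that chain's expected explosion time via the expected passage-time recursion. The only (welcome) refinement is your use of the nondecreasing minorant $\underline{b}$, which makes the monotone coupling work without tacitly assuming $b$ nondecreasing as the paper does.
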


Conditions on the growth of $b$ cannot be removed as can be shown with simple
examples. The convexity assumption is technical, but we are not able to remove
it. It can be weakened to some extent assuming that $b$ is convex on some
interval of the form $[a,+\infty)$.
Finally, we obtain
\begin{theorem}\label{t2.2} Assume $d$ is bounded or linear and $b(s)=s^p+d(s)$
with $1<p\le 3$. If {\rm A1} and {\rm A2} hold, then
\begin{equation}\label{lim_prob}
\lim_{N\to\infty }\TmN=
\Tm\,,\quad\textrm{in probability.} 
\end{equation}
\end{theorem}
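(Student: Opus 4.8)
The plan is to combine Corollary \ref{cor.liminf}, which already gives $\liminf_{N\to\infty}\TmN \ge \Tm$ almost surely (hence in probability), with a matching upper bound $\limsup_{N\to\infty}\TmN \le \Tm$ in probability. For the upper bound I would fix $\delta>0$ and produce, for all large $N$, a high-probability event on which $\TmN \le \Tm + \delta$. The key device is a comparison (stochastic domination) of the particle system with an auxiliary dynamics that we can control, exploiting that near the blow-up time the PDE solution $u$ is already enormous, so by Theorem \ref{t2.1} at a time $T_1 = \Tm - \eta$ slightly before blow-up the particle density $X^N(\cdot,T_1)$ is uniformly close to $u(\cdot,T_1)$, and in particular $\eta_k(T_1) \ge \ell\, m$ for all $k$, where $m = \min_x u(x,T_1)$ can be taken as large as we like by choosing $\eta$ small (since $u$ blows up and, by the results in Section \ref{asymp.behavior}, the blow-up set / lower bounds make $\min_x u(x,t)\to\infty$; if only $\|u\|_\infty\to\infty$ is available one localizes around the maximum point and runs the comparison there).

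The heart of the matter is then a purely "local" blow-up estimate for the particle system started from a flat profile $\eta_k \equiv \ell m$ on $\T_N$. Here the structure $f(s)=s^p$, $b(s)=s^p+d(s)$ is used: the jump part is mean-zero, $d$ is handled because it is bounded or linear (so it can only slow down growth by a controllable amount — for linear $d(s)=\lambda s$ one absorbs it, for bounded $d$ it is negligible compared to $s^p$ once $s$ is large), and the creation part drives the explosion. I would track the minimum of the configuration, or better the spatial average $S^N(t) = \frac1N\sum_k \ell^{-1}\eta_k(t)$, and compare it from below with the solution of the ODE $y' = y^p$ started at $y(0)=m'$ for a suitable $m'$ close to $m$; by convexity (Jensen) the drift of $S^N$ from the creation terms is at least $(S^N)^p$ up to lower-order corrections, while the random fluctuations are controlled on the relevant (very short, of order $m^{1-p}$) time scale by a martingale/Gronwall argument as in the proof of Theorem \ref{t2.1}. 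The ODE $y'=y^p$ explodes in time $\int_{m'}^\infty s^{-p}\,ds = \frac{(m')^{1-p}}{p-1}$, which tends to $0$ as $m'\to\infty$; choosing $m$ large (i.e. $\eta$ small) makes this explosion time less than $\delta$, so $\TmN \le T_1 + \delta \le \Tm + \delta$ on the good event.

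The restriction $1 < p \le 3$ is exactly where the fluctuation control becomes delicate and is, I expect, the main obstacle. On the time scale $m^{1-p}$ on which the comparison ODE blows up, one needs the martingale increments coming from the $N^2$-rate random walks and from the creation/annihilation to be small relative to the growing drift; the quadratic variation of the relevant martingale involves terms like $\ell^{-1}\sum_k (\ell^{-1}\eta_k)^p$ integrated in time, and carrying out the Gronwall/exponential-martingale estimate so that the error stays subcritical forces a constraint on $p$, with $p=3$ the borderline case (this is reminiscent of the $L^2$ versus higher-moment thresholds in Blount-type arguments). I would therefore: (i) set up the stopping time $\tau = \inf\{t: S^N(t) \ge \ell R\}$ for a large threshold $R$; (ii) show $\E$-free, via coupling with the ODE plus an exponential martingale bound valid up to $\tau$, that $\P(\tau \le \delta) \to 1$; (iii) note that $S^N$ reaching $\ell R$ with $R$ large forces some $\eta_k$ large, and then iterate the local argument (now from an even larger flat lower bound, on an even shorter time scale) to push $\TmN$ past in an additional arbitrarily small time. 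Summing a geometric series of these shrinking time increments keeps the total overshoot below $\delta$, completing the proof that $\limsup\TmN \le \Tm$ in probability and hence \eqref{lim_prob}.
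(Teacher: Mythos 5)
Your plan correctly reuses Corollary \ref{cor.liminf} for the lower bound, and your idea of tracking the spatial average $S^N(t)=\frac1N\sum_k\ell^{-1}\eta_k(t)$ and using Jensen on the creation rates is in fact the same comparison that the paper sets up in \eqref{eq27}. But there are two genuine gaps. First, your starting point is false in general: you want a flat, arbitrarily large lower bound $\eta_k(T_1)\ge \ell m$ with $m=\min_x u(x,T_1)\to\infty$ as $T_1\nearrow\Tm$. Blow-up of \eqref{PDE} occurs at finitely many isolated points (Chen--Matano), and away from the blow-up set $u$ stays bounded, so $\min_x u(x,t)$ does not diverge; the parenthetical fallback (``localize around the maximum'') is precisely the hard step and is never carried out --- near $\Tm$ the solution is large only on a shrinking neighborhood, and whether that neighborhood carries enough particles to force a fast explosion is exactly the issue. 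Second, and relatedly, you misattribute the hypothesis $1<p\le 3$: it is not a threshold coming from martingale/quadratic-variation control of the particle system. In the paper it is a purely deterministic PDE input, Theorem \ref{explosion_PDE}(4): for $1<p\le 3$ the blow-up profile $|x-\bar x|^{-2/(p-1)}$ (Vel\'azquez's dichotomy) is non-integrable, hence $\|u(\cdot,t)\|_{L^1(\T)}\to\infty$ as $t\nearrow\Tm$. This $L^1$ blow-up is what makes the \emph{total mass} large: choosing $T<\Tm$ with $\|u(\cdot,T)\|_{L^1}\ge M+1$ and applying Theorem \ref{t2.1} gives $\sum_k\eta_k(T)\ge M\ell N$, i.e.\ $S^N(T)\ge M$. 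Without this input your average-based comparison cannot even start, since $S^N(T_1)\approx\|u(\cdot,T_1)\|_{L^1}$ need not be large (and for $p>3$ it indeed stays bounded, which is why the theorem is restricted to $p\le 3$).

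Once the total mass is known to be $\ge M\ell N$, the paper also avoids the delicate fluctuation/Gronwall analysis and the iteration over shrinking time scales that you sketch in steps (ii)--(iii): by the Jensen bound \eqref{eq27} and the graphical construction, the total particle number dominates pathwise a one-dimensional birth-and-death chain $Y$ with rates $q(r,r+1)=\ell N\,b(r/\ell N)$, $q(r,r-1)=\ell N\|d\|_\infty$ (or the exact linear rate when $d$ is linear), and the explicit expected-hitting-time identity \eqref{eq_34} shows that the expected explosion time of $Y$ started from $M\ell N$ is at most $C\int_M^\infty ds/b(s)$ plus exponentially small terms, uniformly in $N$ and $\ell$, hence $<\delta$ for $M$ large. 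Markov's inequality then gives $\limsup_N\P(\TmN>\Tm+\gamma)\le\delta/\gamma$, and $\delta$ is arbitrary. So the stochastic side of the argument works for any convex $b$ with $\int^\infty 1/b<\infty$; to repair your proof you would need to replace the ``flat large profile / local comparison'' step by the $L^1$ blow-up statement of Theorem \ref{explosion_PDE}(4) (or prove an equivalent quantitative statement about the mass near the blow-up point), and replace the heuristic martingale control by an actual domination argument such as the birth-and-death coupling.
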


The paper is organized as follows: in Section \ref{graphical.construction} we
give a graphical construction of the process. Section \ref{s.3} deals with a
semidiscrete approximation of equation \eqref{PDE}. We discretize the space
variable and prove that the solution of the ODE obtained with this procedure
converges to $u$ as the mesh parameter goes to zero. In Section
\ref{limit.density.of.particles} we prove Theorem \ref{t2.1}. We first prove
this theorem for birth rates $b$ with compact support relying on results of
Blount \cite{BlouLawo1992} and then we extend these results to the general case
that allows explosions by means of truncation and a coupling argument. Finally,
in Section \ref{asymp.behavior} we study the asymptotic behavior of the particle
system. We prove that the process explodes with probability one, Proposition
\ref{bu.prob.1} and that the blow-up times converges to the explosion time of
the PDE \eqref{PDE}, Theorem \ref{t2.2}.

\section{Graphical construction}
\label{graphical.construction}
In this section we give the so called Harris graphical construction of the
process: we construct the particle system as a deterministic function of a
family of Poisson processes. This construction will be useful on the one
hand to couple processes with different birth rates $b$ and on the other
hand to construct jointly a birth and death process that bounds the total
number of particles in the system from below.

\def\Nr{\mathcal N}
\def\Nb{\mathcal N_b}
\def\Nd{\mathcal N_d}
\def\one{{\bf 1}}

Let $(\Nr^+(i), \,\Nr^-(i), \, 1\ge i)$ be a family of Poisson processes in $\R_+$ with rate $N^2$. Let also $(\Nb(r,k),\, \Nd(r,k), \,  r\ge 1, 1\le k \le N)$ be a family of Poisson processes in $\R_+$ with rates $\ell b (\ell^{-1} r)$, $\ell d (\ell^{-1} r)$ respectively. All the processes are taken independent. We construct a process $\xi(t) = (\xi_1(t),\xi_2(t) \dots)$ that determines the position of each particle and the process $\eta(t)$ is defined as the empirical measure of $\xi(t)$, i.e.
\[
 \eta_k(t)=\sum_{i= 1}^{K(t)}\one\{\xi_i(t)=k\}\,.
\]
The variable $K(t)$ is the total number of particles in the system and will be defined inductively jointly with the construction of the process. Initially we start with $K(0)$ labeled particles $1, \dots, K(0)$ at positions $\xi_1, \dots, \xi_{K(0)}$. Assume the process is defined up to time $s\ge 0$ and proceed by recurrence. Start with $s=0$ and define

\[
 \tau(s, \xi(s)):=\inf\left\{t>s \colon t\in \cup_{i= 1}^{K(s)} (\Nr^+(i)
\cup \Nr^-(i))\cup
\cup_{k\in \T_N} \Nb(\eta_k(s),k) \cup \Nd(\eta_k(s),k) \right\}.
\]
For $t\in [s,\tau)$ define $\xi(t)=\xi(s)$ and then
\begin{enumerate}
 \item[1.] If $\tau \in \Nr^\pm(i)$ define $\xi_i(\tau)=\xi_i(s) \pm 1$ and $\xi_j(\tau)=\xi_j(s)$ if $j \ne i$. 
 \item[2.] If $\tau \in \Nb(\eta_k(s))$ set $\xi_{K(s)+1}=k$ and $K(\tau)=K(s)+1$.
 \item[3.] If $\tau \in \Nd(\eta_k(s))$, let $j:=\min\{i\colon \xi_i(s)=k\}$. Set $\xi_j(\tau)=\xi_{K(s)}(s)$  and $K(\tau)=K(s)-1$.
\end{enumerate}
In words, if $\tau \in  (\Nr^+(i)
\cup \Nr^-(i))$ (where $i$ is a particle in the system),
then this particle moves to the right or to the left according to wether $\tau
\in \Nr^+(i)$ or $\tau \in \Nr^-(i)$ . If $\tau \in \Nb(\eta_k(s))$, a new
particle is created at site $k$, and hence $K(s)$ increases in one. Finally, if
$\tau \in \Nd(\eta_k(s))$, a particle is killed at site $k$. We kill the
particle with minimum index and we assign this index to the particle with index
$K(s)$ (and then decrease $K(s)$ in one), so that the alive particles are alway
the ones with index $1, \dots, K(\cdot)$.
The process is then defined up to time $\tau$. Put $s=\tau$ and iterate to
define $\xi(t)$ and $\eta(t)$ up to time $t=\Tm^N$. Observe that $\Tm^N<\infty$
if and only if $\eta(\Tm^N):=\lim_{t\nearrow  \Tm^N}
\|\eta(t)\|_\infty=\infty$, and in this case, the sum of the waiting times is
summable and equal to $\Tm^N$.

\bigskip

\paragraph{{\bf Coupling processes with different birth rates}} Observe that if
we want to construct two different copies $\eta, \tilde \eta$ of the above
process with two different birth rates $b$, $\tilde b$ and we have $b(s)=\tilde
b(s)$ for $s\le M$ then we can use the same Poisson processes $\Nr^+(i),
\,\Nr^-(i), \, 1\ge i$,  $ \Nd(r, \cdot), \,  r\ge 1$  and $\Nb(r,\cdot)$ for $r
\le M$. In this sense we obtain that almost surely  $\eta(t)=\tilde \eta(t)$ for
$0\le t \le T^N_M$, the first time that the process reaches the value $M$.

\section{Convergence of a semidiscrete scheme}\label{s.3}

We now consider deterministic spatial discretizations of
\eqref{PDE}, keeping continuous the time variable.
The goal is to prove convergence of such spatial discretizations to the solution $u$ of the partial differential 
equation \eqref{PDE}. This result will be used as an intermediate step in the proof of the Theorem \ref{t2.1}.  
% Denote by $U (t)=(
% u_{1}(t),...., u_{N}(t))$ the values of
% the approximation at the nodes
% $x_i= -L + (i-1) h$ at time $t$, where  .
% Namely,  $U (t)$ is defined by means of the following equation:
% \begin{equation*}  
% \left\{\begin{array}{l}
% U'(t)= - A g(U) (t) + f(U)(t), \\
% u_{1} (t) = u_{N} (t)=0, \\
% U(0)= \varphi^I,
% \end{array} \right.
% \end{equation*}
% where $A$ is the discrete Laplacian and $\varphi^I$ is the
% Lagrange interpolation of the initial datum, $\varphi$.
% Writing this equation explicitly we obtain the following ODE
% system: 

Throughout this section, we assume that the function $f$ in \eqref{PDE} is globally Lipschitz.

We define the semidiscrete
approximation $u^N(t)=(u^N_1(t),\ldots,u^N_N(t))$ of the PDE \eqref{PDE}  as the
solution of the following ODE system:
\begin{equation}
\left\{
\begin{array}{ll}
\label{system}
\frac{d}{dt}u^N_k(t) =   \displaystyle{N^{2}} [ u^N_{k+1}(t)
-2u^N_k(t)+u^N_{k-1}(t)] + f(u^N_k(t)),  &k\in \T_N, \\
\\
u^N_k (0)=  \varphi (x_k), & k\in \T_N.
\end{array}
\right.
\end{equation}

\begin{proposition}\label{convergence.semidiscrete}
Let $u\in C^{4,1}(\T\times[0,T])$ be a positive
solution of {\rm (\ref{PDE})} and $u^N(t)$ the semidiscrete
approximation given by {\rm (\ref{system})}. Then, there exists a
positive constant $C$ depending on the $C^{4,1}(\T\times[0,T])$
norm of $u$ such that, for every $N$ large enough, 
$$
\sup_{t\in[0,T]}\max_{k\in \T_N}|u(x_k,t)-u^N_k(t)|\;\le\; C N^{-2}. $$
\end{proposition}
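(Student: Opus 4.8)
The strategy is the standard consistency-plus-stability argument for finite-difference schemes, carried out in the discrete $L^\infty$ norm so as to exploit a discrete maximum principle. First I would define the error $e^N_k(t) := u(x_k,t) - u^N_k(t)$ and compute the equation it satisfies. Plugging the exact solution $u$ into the right-hand side of \eqref{system} produces a truncation (consistency) error: by Taylor expansion in the space variable,
\[
N^2\big[u(x_{k+1},t)-2u(x_k,t)+u(x_{k-1},t)\big] = u_{xx}(x_k,t) + \rho^N_k(t),
\]
where $|\rho^N_k(t)| \le C_1 N^{-2}$ with $C_1$ controlled by $\sup |\partial_x^4 u|$ over $\T\times[0,T]$ (this is exactly where the $C^{4,1}$ regularity hypothesis is used). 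Since $u_t = u_{xx} + f(u)$ at $(x_k,t)$, subtracting the ODE \eqref{system} gives
\[
\frac{d}{dt} e^N_k(t) = N^2\big[e^N_{k+1}(t) - 2e^N_k(t) + e^N_{k-1}(t)\big] + \big(f(u(x_k,t)) - f(u^N_k(t))\big) + \rho^N_k(t),
\]
with $e^N_k(0) = 0$ for all $k$.

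Next I would bound the nonlinear term using the global Lipschitz assumption on $f$ (in force throughout this section): $|f(u(x_k,t)) - f(u^N_k(t))| \le L\,|e^N_k(t)|$, where $L$ is the Lipschitz constant. Now set $E^N(t) := \max_{k\in\T_N} |e^N_k(t)|$. The key step is a Gronwall-type differential inequality for $E^N(t)$. This can be obtained either via a discrete maximum principle — fix a time $t$ and an index $k^\ast$ achieving the maximum of $|e^N_k(t)|$; at that index the discrete Laplacian of $e^N$ has the sign opposite to $e^N_{k^\ast}$, so it contributes non-positively to $\tfrac{d}{dt}\tfrac12 (e^N_{k^\ast})^2$ — or, more robustly, by a variation-of-constants / semigroup argument: the discrete heat semigroup $e^{tN^2\Delta_N}$ (with $\Delta_N$ the periodic discrete Laplacian) is a contraction on $\ell^\infty(\T_N)$, so Duhamel's formula
\[
e^N(t) = \int_0^t e^{(t-s)N^2\Delta_N}\big[ (f(u(\cdot,s))-f(u^N(\cdot,s))) + \rho^N(\cdot,s)\big]\, ds
\]
yields $E^N(t) \le \int_0^t \big(L\, E^N(s) + C_1 N^{-2}\big)\, ds$. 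Gronwall's lemma then gives $E^N(t) \le C_1 N^{-2}\, t\, e^{Lt} \le C N^{-2}$ uniformly for $t\in[0,T]$, with $C = C_1 T e^{LT}$ depending only on $L$, $T$ and the $C^{4,1}$ norm of $u$. This is the claimed bound.

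The step requiring the most care is establishing the contractivity of the discrete heat semigroup on $\ell^\infty$ (equivalently, the discrete maximum principle). One must check that the matrix $\mathrm{Id} + \tfrac{h}{N^2}\Delta_N^{-1}$... more simply, that $e^{tN^2\Delta_N}$ has nonnegative entries summing to one along each row, which holds because $N^2\Delta_N$ is the generator of a continuous-time random walk on $\T_N$; hence it is a Markov semigroup and an $\ell^\infty$-contraction, uniformly in $N$ — this uniformity is what makes the $N^2$ scaling harmless. If one instead prefers the maximum-principle route, a mild technical point is that $E^N(t)$ is only Lipschitz (not $C^1$) in $t$, so one works with Dini derivatives or with the smooth surrogate $\sum_k (e^N_k)^2$; either way the argument is routine once the sign of the discrete-Laplacian term at the extremal index is correctly identified. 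The positivity of $u$ assumed in the statement is not actually needed for this estimate (it is inherited from the comparison principle and carried along for later use); what is essential is the global Lipschitz property of $f$, which localizes the nonlinearity into a term that Gronwall can absorb.
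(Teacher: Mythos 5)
Your argument is correct, and the consistency half is exactly what the paper does: Taylor-expand $u$ at the grid points to get a truncation error of size $O(N^{-2})$ controlled by $\sup|u_{xxxx}|$, write the equation satisfied by the error, and absorb the nonlinearity with the global Lipschitz bound on $f$ (positivity of $u$ is indeed never used, and the global-in-time existence of $u^N$ is guaranteed by the Lipschitz assumption in force in this section). Where you diverge is in how the linear stability estimate is closed. The paper does not invoke Gronwall or Duhamel: it proves an elementary comparison lemma (Lemma \ref{compar}) for the discretized system \eqref{eq.error} by a first-crossing-time contradiction (with the usual $+C\ve t$ perturbation to reduce to strict inequalities), and then exhibits the explicit supersolution $\bar z_k(t)=e^{2C_*t}/N^2$, applying the comparison twice, to $e_k$ and to $-e_k$. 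Your first route (maximum index, Dini derivatives, differential inequality for $\max_k|e_k|$) is essentially the same mechanism as that comparison lemma, just phrased as a Gronwall inequality instead of an explicit barrier; your second route, Duhamel's formula combined with the fact that $e^{tN^2\Delta_N}$ is a Markov semigroup and hence an $\ell^\infty$-contraction uniformly in $N$, is genuinely different in flavor. The semigroup/Gronwall version is more robust (no strict-inequality perturbation, no need to guess a barrier, and it generalizes immediately to other discretizations or to stochastic perturbations in the Duhamel term), while the paper's supersolution argument is completely self-contained and elementary, avoiding any semigroup machinery; both produce the same $C\,N^{-2}$ bound with an $e^{CT}$-type constant depending on the $C^{4,1}$ norm of $u$ and the Lipschitz constant of $f$.
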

\noindent As a consequence of Proposition \ref{convergence.semidiscrete}, 
\begin{equation}\label{bound_u}
\limsup_{N\to \infty} \sup_{t\in[0,T]}\max_{k\in \T_N} \vert u^N_k(t)\vert\;<\;
\infty\,.
\end{equation}

We need the next lemma about solutions (and supersolutions) of the  following ODE system
% To end the proof of Proposition \ref{convergence.semidiscrete} we need to study solutions (and supersolutions) of the problem
% supersolutions) 
\begin{equation}
\label{eq.error}
 \begin{cases}
 z'_k   =  N^{2}(z_{k+1}-2z_k +z_{i-1} ) + C_*(|z_k| + N^{-2} ), &  k\in \T_N,
\\
z_k(0)=0, &   k\in \T_N.
 \end{cases}
\end{equation}
\begin{definition}\label{def.super} We say that $\overline{Z}=(z_{1},\dots, z_N)$ is  a
supersolution of \eqref{eq.error} if 
\begin{equation}
\label{super}
 \begin{cases}
 \bar z'_k   \ge  N^{2}(\bar z_{k+1}
-2\bar z_k +\bar z_{k-1} ) + C_*(|\bar z_k| + N^{-2})\,,  &   k\in \T_N, \\
z_k(0)\ge0, &   k\in \T_N.
 \end{cases}
\end{equation}
Analogously, we say that $\underline{Z}$ is a subsolution if
it satisfies (\ref{super}) with the reverse inequalities.
\end{definition}
% \medskip

\begin{lemma}
\label{compar}
Let $\overline{Z}$ and $\underline{Z}$ be a supersolution and a subsolution of \eqref{super} respectively, and let $Z$ be a solution of \eqref{eq.error}. Then
$$ \overline{Z} (t) \ge Z(t) \ge \underline{Z} (t)\,. $$
\end{lemma}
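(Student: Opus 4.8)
The plan is to prove the lower bound $Z(t)\ge\underline Z(t)$; the upper bound is entirely analogous (or follows by applying the lower bound to $-\,Z$, $-\,\overline Z$, using that $x\mapsto|x|$ satisfies $|a|\ge|b|+\mathrm{sgn}(b)(a-b)$, i.e. the nonlinearity $C_*|z_k|$ is convex so that differences of a solution and a supersolution satisfy a \emph{linear} differential inequality with nonnegative off-diagonal coupling). Set $w_k(t):=Z_k(t)-\underline Z_k(t)$. Subtracting the (sub)solution inequality \eqref{super} from the equation \eqref{eq.error} gives
\begin{equation*}
w_k'\;\ge\;N^2\bigl(w_{k+1}-2w_k+w_{k-1}\bigr)\;+\;C_*\bigl(|Z_k|-|\underline Z_k|\bigr)\,,\qquad k\in\T_N,
\end{equation*}
with $w_k(0)\le 0$ reversed, i.e. $w_k(0)\ge 0$... — more precisely, since $\underline Z$ is a subsolution, $\underline Z_k(0)\le 0=Z_k(0)$, so $w_k(0)\ge 0$. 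Using $\bigl||Z_k|-|\underline Z_k|\bigr|\le|w_k|$, we obtain $w_k'\ge N^2(w_{k+1}-2w_k+w_{k-1})-C_*|w_k|\ge N^2(w_{k+1}-2w_k+w_{k-1})-C_* w_k^-$, where $w_k^-=\max(-w_k,0)$; but in fact it is cleaner to just keep the term $-C_*|w_k|$ and argue directly that $w_k(t)\ge 0$ for all $t\in[0,T]$.

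The key step is a maximum-principle argument on the finite system. I would use the standard perturbation trick: fix $\delta>0$ and set $w_k^\delta(t):=w_k(t)+\delta e^{\lambda t}$ with $\lambda$ a constant to be chosen; then $w_k^\delta(0)=w_k(0)+\delta>0$, and
\begin{equation*}
(w_k^\delta)'\;\ge\;N^2\bigl(w_{k+1}^\delta-2w_k^\delta+w_{k-1}^\delta\bigr)-C_*\,|w_k^\delta-\delta e^{\lambda t}|+\delta\lambda e^{\lambda t}\;\ge\;N^2\bigl(w_{k+1}^\delta-2w_k^\delta+w_{k-1}^\delta\bigr)-C_*|w_k^\delta|+\delta(\lambda-C_*)e^{\lambda t}\,.
\end{equation*}
Choose $\lambda>C_*$. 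Suppose, for contradiction, that $w_k^\delta$ becomes non-positive somewhere in $[0,T]$; let $t_0:=\inf\{t:\min_k w_k^\delta(t)\le 0\}$, which is positive by continuity and the strictly positive initial data, and let $k_0$ realize the minimum at $t=t_0$, so $w_{k_0}^\delta(t_0)=0$ and $w_j^\delta(t_0)\ge 0$ for all $j$. Then the discrete Laplacian $N^2(w_{k_0+1}^\delta-2w_{k_0}^\delta+w_{k_0-1}^\delta)\ge 0$, the term $-C_*|w_{k_0}^\delta(t_0)|=0$, and $\delta(\lambda-C_*)e^{\lambda t_0}>0$, so $(w_{k_0}^\delta)'(t_0)>0$. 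On the other hand, since $t_0$ is the first time the minimum over $k$ hits $0$ and $w_{k_0}^\delta(t_0)=0$ is that minimum, we must have $(w_{k_0}^\delta)'(t_0)\le 0$ — a contradiction. (One should be slightly careful that $k_0$ may depend on $t$; the clean way is to note $m(t):=\min_k w_k^\delta(t)$ is Lipschitz and use a Dini-derivative version of the above, or simply observe that on a short interval before $t_0$ some index stays the argmin.) Hence $w_k^\delta(t)>0$ on $[0,T]$ for every $\delta>0$; letting $\delta\to 0$ yields $w_k(t)\ge 0$, i.e. $Z(t)\ge\underline Z(t)$.

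The main obstacle is purely technical: making the "first time the minimum vanishes" argument rigorous when the minimizing coordinate $k_0$ varies with $t$. This is handled by working with $m(t)=\min_k w^\delta_k(t)$ and its lower-right Dini derivative $D^+m(t)$, which at any $t$ is bounded above by $(w^\delta_{k_0(t)})'(t)$ for an index $k_0(t)$ attaining the minimum, together with a Gronwall-type estimate $D^+m(t)\ge -C_* |m(t)| + \delta(\lambda - C_*)e^{\lambda t}$ when $m(t)\le 0$; this forces $m$ to stay positive. Everything else — subtracting the inequalities, the bound $\bigl||a|-|b|\bigr|\le|a-b|$, the sign of the discrete Laplacian at an interior minimum (here there is no boundary, the torus being periodic, which only makes things easier) — is routine.
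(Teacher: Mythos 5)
Your proposal is correct and takes essentially the same route as the paper: a first-contact-time discrete maximum principle made strict by a small perturbation (the paper adds $C\ve t$ to the supersolution, you add $\delta e^{\lambda t}$ to the difference $w$ and use $\bigl||a|-|b|\bigr|\le |a-b|$), with the upper bound handled by the symmetric argument. One harmless slip: the intermediate inequality $-C_*|w_k|\ge -C_*w_k^-$ is backwards (since $|w_k|\ge w_k^-$), but you discard that route and it plays no role in your argument.
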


\begin{proof} By an approximation procedure we restrict ourselves to
consider strict inequalities in \eqref{super}. If that is not the case, we consider 
$\tilde Z(t)=\overline{Z}(t) + C\ve t$ with adequate $C$, and letting $\ve \searrow 0$ yields the result.

We prove that $\overline{Z} (t) > Z (t)$ arguing by contradiction. Assume that there exists a
first time $t_*$ and  $k\in \T_N$ such that $\overline{z}_k (t_*)= z_k (t_*)$. 
Then, we would have
$$0 \ge \overline{z}_k '(t_*)- z_k '(t_*) >
 N^{2}(\overline{z}_{k+1}(t_*)- z_{k+1} (t_*)
 +\overline{z}_{k-1}(t_*)- z_{k-1} (t_*) ) \ge 0\,,$$
 a contradiction. The inequality $Z(t) \ge \underline{Z}(t)$ is handled in a
similar way.
\end{proof}

\begin{proof}[\textbf{Proof of Proposition \ref{convergence.semidiscrete}}]
% In the course of this proof we will denote by $C$ a positive constant independent
% of $h$ which can change from one inequality to another. In contrast, the positive constant $C_*$ will
% not change.
% 
% \medskip

For $k\in \T_N$ denote $u_k= u (x_k,t)$ and define the error function 
$$e_k:=u^N_k-u_k\,.$$
By means of Taylor's expansion, for $k\in
\T_N$, there
exist $c_k\in (x_{k}, x_{k+1})$ and 
$\tilde{c}_k\in (x_{k-1}, x_{k})$ such that
\begin{equation*}
 u_{k+1}=u_{k}+u_x(x_k,t)\frac{1}{N}+u_{xx}(x_k,t)\frac{1}{2!N^2}
+u_{xxx}(x_k,t)\frac{1}{3!N^3}+u_{xxxx}(c_k,t)\frac{1}{4!N^4}
 \end{equation*}
and
\begin{equation*}
 u_{k-1}=u_{k}-u_x(x_k,t)\frac{1}{N}+u_{xx}(x_k,t)\frac{1}{2!N^2}
-u_{xxx}(x_k,t)\frac{1}{3!N^3}+u_{xxxx}(\tilde{c}_k,t)\frac{1}{4!N^4}\,.
 \end{equation*}
% \begin{equation*}
%  g(v_{i-1})=g(v_{i})-(g\circ u)_x(x_i)h+(g\circ u)_{xx}(x_i)\frac{h^2}{2!}-(g\circ u)_{xxx}(x_i)\frac{h^3}{3!}+(g\circ u)_{xxxx}(\tilde{c}_i)\frac{h^4}{4!}\,.
%  \end{equation*}
Summing the equations above and recalling that $u$ is the solution of \eqref{PDE}
% $a_i:=\frac{1}{4!}((g\circ u)_{xxxx}(\tilde{c}_{i})+ (g\circ u)_{xxxx}(c_{i}))$ 
gives 
% Summing the Taylor approximations of $g(v_{i+1})$ and $g(v_{i-1})$ at $x_i$ we get that for $0\le i \le N$, there exist $c_i \in (x_i, x_{i+1})$ and 
% $\tilde{c}_i \in (x_{i-1}, x_{i})$ such that $v_i$ verifies
$$
u_k'=N^{2}(u_{k+1}-2u_{k} + u_{k-1}) + f(u_k) -
\frac{1}{4!N^2}( u_{xxxx}(\tilde{c}_{k})+ u_{xxxx}(c_{k})).
$$
Writing $a_k:=\frac{1}{4!}(u_{xxxx}(\tilde{c}_{k})+ u_{xxxx}(c_{k}))$, we get that the error function satisfies, for
$k\in \T_N$, 
\begin{equation*}\label{eq06}
e_k'\;=\; N^2(e_{k+1}-2 e_k + e_{k-1}) + f(u^N_k)-f(u_k) -a_kN^{-2}\,.
\end{equation*}
Since $f$ is globally Lipschitz,
there exists a positive constant $C_*$ independent of $N$ such that
\[
e_k'\le   N^{2}(e_{k+1} -2e_k +e_{k-1})+C_*(|e_k|  + N^{-2})\,, \quad \forall
k\in \T_N\,.
\]
Hence $(e_1,\dots, e_N)$ is a sub-solution of \eqref{super}. Consider the super-solution $\bar Z=(\bar z_{1}, \dots,
\bar z_N)$ given by $\bar z_k(t)=e^{2C_* t}/N^2$. Notice that $\bar Z$
verifies \eqref{super}. By Lemma \ref{compar},
$$
e_i (t)\le \bar z_i(t)\le e^{2C_* T}/N^2 \quad \mbox{for all } k \in \T_N\,.
$$
Repeating the same arguments as before with $-e_i$, we obtain
$$
|e_k(t)|\le \bar z_k(t)\le e^{2C_* T}/N^2 \quad \mbox{for all } k \in \T_N\,.
$$
This completes the proof.
\end{proof}

\medskip

\section{Limit for the density of particles.}
\label{limit.density.of.particles}
The following key estimate is obtained by Blount in \cite{BlouLawo1992}.
\begin{theorem}[Blount, \cite{BlouLawo1992}] Assume $b$ is Lipschitz continuous
with compact support. Then there exist constants $K,a>0$ depending on $T$ and
$\varphi$ and a process $Y^N$ such that
\begin{equation*}
 \Vert \overline{X}^N(t)-u^N(t)\Vert_\infty\;\leq\; (1+Kt\,e^{Kt})\,\Big(K\Vert \overline{X}^N(0)-u^N(0)\Vert_\infty+\sup_{s\in[0,t]}\Vert Y^N(s)\Vert_\infty\Big)\,.
\end{equation*}
Moreover,
\begin{equation}
\label{BC}
\P \big(e^{-4T}\sup_{t\in[0,T]}\Vert Y^N(t)\Vert_\infty >\ve \big) \le
4N^3e^{-a\ve^2
\ell}\,.
\end{equation}
\end{theorem}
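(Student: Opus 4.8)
The plan is to split the bound into a deterministic Gronwall comparison between the density process and the semidiscrete scheme of Section~\ref{s.3}, and an exponential concentration estimate for a stochastic convolution; the second part is the heart of the matter.

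Let $\Delta_N$ be the generator of the nearest-neighbour walk on $\T_N$ run at speed $N^2$, $(\Delta_N g)(x_k)=N^2(g(x_{k+1})-2g(x_k)+g(x_{k-1}))$, and $P^N_t=e^{t\Delta_N}$, a Markov (hence $\ell^\infty$-contraction) semigroup on $\T_N$ commuting with the shift. Reading off the jump rates, $\ell^{-1}\eta_k$ has instantaneous drift $(\Delta_N\overline{X}^N(\cdot,t))(x_k)+f(\overline{X}^N(x_k,t))$, so
\[
 d\overline{X}^N(x_k,t)=\big[(\Delta_N\overline{X}^N(\cdot,t))(x_k)+f(\overline{X}^N(x_k,t))\big]\,dt+d\mathcal{M}^N_k(t),
\]
with $\mathcal{M}^N$ a martingale which I would split as $\mathcal{M}^N=\mathcal{M}^{\mathrm{rw}}+\mathcal{M}^{\mathrm{bd}}$. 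The birth--death part has jumps of size $\ell^{-1}$ and $\tfrac{d}{ds}\langle\mathcal{M}^{\mathrm{bd}}_k\rangle_s=\ell^{-1}(b+d)(\overline{X}^N(x_k,s))$. The structural point I want to exploit is that the random-walk part is a discrete gradient of a bond martingale: if $W_j$ denotes the compensated net flux across the bond $(j,j+1)$, which has unit jumps and $\tfrac{d}{ds}\langle W_j\rangle_s=N^2(\eta_j+\eta_{j+1})$, then $\mathcal{M}^{\mathrm{rw}}_k=\ell^{-1}(W_{k-1}-W_k)$.

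Setting $D_k(t)=\overline{X}^N(x_k,t)-u^N_k(t)$ and applying Duhamel's formula against $P^N$ (recall that \eqref{system} reads $\tfrac{d}{dt}u^N_k=(\Delta_N u^N)(x_k)+f(u^N_k)$) gives
\[
 D(t)=P^N_t D(0)+\int_0^t P^N_{t-s}\big[f(\overline{X}^N(s))-f(u^N(s))\big]\,ds+Y^N(t),\qquad Y^N(t):=\int_0^t P^N_{t-s}\,d\mathcal{M}^N(s).
\]
Because $P^N$ is an $\ell^\infty$-contraction and $f$ is globally Lipschitz with constant $L$, $\|D(t)\|_\infty\le\|D(0)\|_\infty+L\int_0^t\|D(s)\|_\infty\,ds+\sup_{s\le t}\|Y^N(s)\|_\infty$, and Gronwall's lemma yields the asserted inequality; since linear interpolation between grid points does not affect sup-norms, this is really the estimate on $\T$. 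This identifies $Y^N$ with the stochastic convolution above.

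The remaining task, and the one I expect to be the main obstacle, is the tail bound \eqref{BC}. Fix a site $k$ and assume momentarily an a priori bound $\|\overline{X}^N(\cdot,s)\|_\infty\le R$ on $[0,T]$. The birth--death contribution has $\langle Y^{N,\mathrm{bd}}_k\rangle_T\le \ell^{-1}\max_{[0,R]}(b+d)\int_0^T\!\sum_j(P^N_{t-s})_{kj}^2\,ds=O(\ell^{-1})$, using $\sum_j(P^N_r)_{kj}^2\le1$. For the random-walk contribution I would move the gradient onto the kernel by summation by parts: with $\nabla^+h_j:=h_{j+1}-h_j$,
\[
 Y^{N,\mathrm{rw}}_k(t)=\ell^{-1}\!\int_0^t\!\sum_j\big(\nabla^+(P^N_{t-s})_{k\cdot}\big)_j\,dW_j(s),\qquad \langle Y^{N,\mathrm{rw}}_k\rangle_T\le \frac{2RN^2}{\ell}\int_0^T\!\sum_j\big[(\nabla^+(P^N_r)_{k\cdot})_j\big]^2\,dr.
\]
The crucial analytic input is the (shift-invariant) gradient heat-kernel estimate $\int_0^T\sum_j[(\nabla^+(P^N_r)_{k\cdot})_j]^2\,dr=O(N^{-2})$, the would-be divergence $\int_0 r^{-3/2}dr$ being truncated at the lattice scale $r\sim N^{-2}$; hence $\langle Y^{N,\mathrm{rw}}_k\rangle_T=O(R/\ell)$, and it is precisely this cancellation of the factor $N^2$ that produces the exponent $\ell$ rather than $\ell/N$ in \eqref{BC}. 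Since all jumps of $Y^N_k$ are $O(\ell^{-1})$ and $\langle Y^N_k\rangle_T=O(1/\ell)$, a Bernstein-type exponential martingale inequality gives $\P(|Y^N_k(t)|>\ve)\le 2e^{-a\ve^2\ell}$ pointwise in $t$. To upgrade to $\sup_{t\in[0,T]}$ --- $Y^N_k$ being a convolution rather than a martingale --- I would discretize $[0,T]$ into $m=O(N^2)$ times, write $Y^N_k(t)=(P^N_{t-t_i}Y^N(t_i))_k+\int_{t_i}^t(P^N_{t-s}d\mathcal{M}^N(s))_k$ on $[t_i,t_{i+1})$, estimate the increment via Doob's inequality applied to the exponential supermartingale of the un-convolved integral (freezing $P^N_{t-s}$ at $P^N_{t_{i+1}-s}$ at negligible cost by continuity of $r\mapsto P^N_r$), and union-bound over the $m$ times and the $N$ sites, which is where the prefactor $4N^3$ comes from. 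It remains to discharge the a priori bound: choose $R>\sup_{[0,T]}\|u\|_\infty+1$ (possible since $b$ has compact support, so $f$ is bounded above and both $u$ and $u^N$ stay bounded on $[0,T]$), run everything up to $\sigma_R:=\inf\{t:\|\overline{X}^N(\cdot,t)\|_\infty>R\}$, and close the loop: the Gronwall inequality together with the tail just established for $\sup_{t\le\sigma_R\wedge T}\|Y^N(t)\|_\infty$ forces $\sigma_R>T$ off an event of probability $\le 4N^3e^{-a'\ell}$, so stopping is harmless.
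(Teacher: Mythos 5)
The paper itself offers no proof of this statement: it is imported from Blount \cite{BlouLawo1992}, with only the remark that passing from a polynomial $f$ with negative leading term to a general $C^1$ nonlinearity causes no difficulty. Your sketch reconstructs, in substance, the argument of that cited source (in the Kotelenez--Blount style): mild formulation of the density field against the discrete heat semigroup, identification of $Y^N$ with the stochastic convolution, an $\ell^\infty$-contraction plus Gronwall step for the first display, and exponential martingale (Bernstein-type) bounds combined with a union bound over the $N$ sites and $O(N^2)$ time points, which is indeed the origin of the prefactor $N^3$ in \eqref{BC}. The decisive quantitative step checks out: on $\T_N$ one has $\sum_j\bigl[(\nabla^+(P^N_r)_{k\cdot})_j\bigr]^2=N^{-3}\sum_m\lambda_m e^{-2\lambda_m r}$ with $\lambda_m=4N^2\sin^2(\pi m/N)$, so its time integral is at most $1/(2N^2)$; this cancels the $N^2$ in $\langle W_j\rangle$ and leaves a quadratic variation of order $1/\ell$ with jumps of order $1/\ell$, whence the rate $e^{-a\ve^2\ell}$ with $a$ depending on $T$ and $\varphi$ through the a priori bound $R$. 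Two steps are only sketched and would need to be written out carefully: the uniform-in-time control on each small interval (the convolution is not a martingale, and the error made by freezing the kernel must be quantified, exactly as in Blount), and the closing of the a priori bound, which means your pathwise Gronwall inequality is literally established only up to the stopping time $\sigma_R$, outside an event of probability at most $4N^3e^{-a'\ell}$, rather than verbatim as stated; since this is precisely the form in which the estimate is used in Section \ref{limit.density.of.particles}, nothing downstream is affected. In short: correct in outline, and the same route as the proof the paper delegates to the literature, not a genuinely different one.
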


The proof in \cite{BlouLawo1992} considers the case where $f$ is a polynomial with negative leading term, but the proof can be extended to our case with no difficulty.

\begin{proof}[Proof of Theorem \ref{t2.1}]
Assume first that $b$ has compact support. Condition A1 in Theorem
\ref{t2.1} means that the bound in \eqref{BC} is summable in $N$, so
Borel-Cantelli's lemma implies $\sup_{t\in[0,T]}\Vert
\overline{X}^N(t)-u^N(t)\Vert_\infty \to 0$ almost surely. This fact, combined
with Proposition \ref{convergence.semidiscrete} gives us
\[
\lim_{N\to \infty}\sup_{t\in[0,T]}\Vert \overline{X}^N(t)-u(\cdot,t)\Vert_\infty = 0\,, \quad \mbox{almost surely}
\]
if $b$ has compact support.
 
For general $b$, we consider 
\begin{equation}\label{max}
M\;=\;\sup_{t\in[0,T]} \Vert u(\cdot,t)\Vert_\infty \,,
\end{equation}
which is finite since we are imposing $T<T_{\textrm{max}}$.

Let $b_{M+1}$ be a smooth function with compact support that coincides with $b$ in the interval $[0,M+1]$. Denote by $X^{N,M+1}(x,t)$ the process with creation of particles driven by $b_{M+1}$ instead  of $b$. 

By making use of Harris graphical construction of Section
\ref{graphical.construction}, for each $N$ we can couple the processes
$X^{N,M+1}(x,t)$ and $X^{N}(x,t)$ in such a way that their trajectories coincide
up to the stopping time
\begin{equation*}
 T^N_{M+\frac12}\;=\; \inf\{t\geq 0;\; \Vert X^{N, M+1}(t)\Vert_\infty \ge
M+\pfrac12\}.
\end{equation*}
Observe that
\begin{equation}\label{eq20}
\Vert X^N(t)- u(\cdot,t)\Vert_\infty \le \Vert X^N(t)- X^{N, M+1}(t)\Vert_\infty
+ \Vert X^{N, M+1}(t)-u(\cdot,t)\Vert_\infty
\end{equation}
Denote by $u^{M+1}(\cdot,t)$ the solution of \eqref{PDE}  with  $f=b_{M+1}-d$.
Since uniqueness hold for \eqref{PDE} and $\|u(\cdot,t)\|_\infty\le M$ in
$[0,T]$, we have
$u^{M+1}(\cdot,t)=u(\cdot,t)$, for $t\in[0, T]$.
Thus, the second term in \eqref{eq20} can be replaced by $\Vert X^{N,
M+1}(t)-u^{M+1}(t)\Vert_\infty$, which goes to zero as $N\uparrow \infty$
uniformly and almost surely. Hence there exists a finite random $N_0$ such that,
for $N\ge N_0$,
\begin{equation*}
 \sup_{t\in[0,T]}\Vert X^{N, M+1}(t)\Vert_\infty\; \leq\;  \sup_{t\in[0,T]}\Vert
u(\cdot,t)\Vert_\infty+\pfrac12\;= \; M+\pfrac12\,.
\end{equation*}
Since $X^N$ and $X^{N,M+1}$ are coupled, the first term on the r.h.s of
\eqref{eq20} vanishes for $N \ge N_0$, concluding the proof of the theorem.
\end{proof}

Observe that for $\eps>0$,  Theorem \ref{t2.1} implies $\liminf_{N\to\infty}
T_{\textrm{expl}}(N)\geq T_{\textrm{max}}-\eps$ almost surely.  Letting
$\eps\searrow 0$ yields
\begin{equation*}\label{eq25}
 \liminf_{N\to\infty} T_{\textrm{expl}}(N)\;\geq\; T_{\textrm{max}} \,,\quad \textrm{almost surely},
\end{equation*}
proving Corollary \ref{cor.liminf}.

\section{The explosion times}
\label{asymp.behavior}

In this section we prove the finiteness of the explosion time $\Tm^N$ for every
$N$, Proposition \ref{bu.prob.1} and the convergence as $N\to \infty$, Theorem
\ref{t2.2}. Both proofs rely on a coupling with a one-dimensional birth and
death process.

The goal is to construct (jointly with $X^N$) a process $Y=(Y(t), t\ge 0)$ that
dominates the total number of particles $\|\eta(t)\|_\infty:=\sum_k \eta_k(t)$
from below
almost surely and for every time. According to the model, the rate at which a
new particle (somewhere) is created is given by $\sum_{k\in \T_N} \ell
b(\frac{\eta_k}{\ell})$, the sum of the rates of each individual site. Since $b$
is convex and nondecreasing,
\begin{equation}\label{eq27}
   \sum_{k\in \T_N} \ell b\Big(\frac{\eta_k}{\ell}\Big) \; =\; \ell N \sum_{k\in \T_N} \frac{1}{N} b\Big(\frac{\eta_k}{\ell}\Big)
\;\geq\; \ell N b\Big(\frac{1}{\ell N}\sum_{i\in \T_N} \eta_i\Big)\;\geq \ell N b\Big(\frac{|\eta|}{\ell N}\Big) =: q(|\eta|, |\eta|+1)\,.
\end{equation}
Analogously, if $d$ is bounded, the rate for annihilation of a particle is the
sum of the rates in each site, which we bound (for $\eta\ne0$) by
\begin{equation}\label{eq28}
   \sum_{k\in \T_N} \ell d\Big(\frac{\eta_k}{\ell}\Big) \; \leq \ell N \Vert d \Vert_\infty\, =: q(|\eta|,|\eta|-1)\,.
\end{equation}
Then, we can construct a process $(Y(t))$ with rates $q$ jointly with
$(\eta(t))$, in such a way that  $\|\eta(t)\|_\infty\ge Y(t)$ almost surely. We
need to
slightly modify the construction of $\eta(t)$. The construction is almost the
same with the only modification that for each $k$, we construct the Poisson
processes $\Nd(r, k),\, r\ge 1$ as independent thinnings of a Poisson process
$\Nd(k)$ with rate $\ell \|d\|_\infty$ and we add (independent) uniform marks to
the points of the Poisson processes $\Nb(r,k)$, i.e.: to each point $t\in
\Nb(r,k)$ we attach a random variable $U_t$ with uniform distribution in
$[0,1]$. All of them independent of all the processes. We construct $Y$
recursively. Assume that the process is defined up to time $s$. Let
\[
 \tau(s, Y(s)):=\inf\left\{t>s \colon t\in \cup_{k\in \T_N} \Nb(\eta_k(s),k)
\cup \Nd(k) \right\}\,.
\]
For $t\in [s,\tau(s,Y(s)))$ define $Y(t)=Y(s)$ and then
\begin{enumerate}
 \item[1.] If $\tau \in \cup_{k\in \T_N} \Nb(\eta_k(s),k)$ and $U_{\tau(s,Y(s))}
< \ell N b\Big(\frac{|\eta|}{\ell N}\Big)\Big/\sum_{k\in \T_N} \ell
b\Big(\frac{\eta_k}{\ell}\Big)$, set $Y(\tau)=Y(s)+1$.
 \item[2.] If $\tau \in \cup_{k\in\T_N}\Nd(k)$, set $Y(\tau)=Y(s)-1$.
 \item[3.] Otherwise set $Y(\tau)=Y(s)$.
\end{enumerate}
The process is then defined up to time $\tau$. Put $s=\tau$ and iterate to
define $Y(t)$ as long as possible. This construction guarantees that if for some
time $s\ge0$ we have $\|\eta(s)\|_\infty\ge Y(s)$ then
\begin{equation}\label{eq291}
\|\eta(t)\|_\infty\geq Y(t)\,, \qquad \textrm{for all }  t\geq s\,.  
\end{equation}
It is straightforward to check that in addition, $Y$ is a birth and death process with rates $q(r,r+1)= \ell N b(r/\ell N)$, and $q(r,r-1)=\ell N \|d\|_\infty$.
The next step is to proof that $Y$ explodes in finite time and estimate the explosion time of $Y(t)$. We begin with some considerations about birth and death processes with rates $\mathbf{b}_r$ and  $\mathbf{d}_r$ respectively.
For such a process, let $\tau_r$ be the hitting time of the state $r\in \bb N$
and denote
$$f_r\;=\; \bb E_r[\tau_{r+1}]\,,$$
the expected time to hit $r+1$, starting at $r$. 
By the Markov property,  
$$ f_{r+1} = \frac{1}{\mathbf{b}_r+\mathbf{d}_r}+ \frac{\mathbf{d}_r}{\mathbf{b}_r}f_r\,,\quad \textrm{ for }r\geq 0\,.$$
Notice that 
$\sum_{r=r_0}^\infty f_r$ is the expected time spent by the process starting from $r_0\in \bb N$ before reaching $\infty$. With an inductive procedure we derive the formula
\begin{equation}\label{eq_34}
 \sum_{r=1}^n f_r\;=\; \sum_{r=0}^{n-1}\frac{1}{\mathbf{b}_r+\mathbf{d}_r}+ f_0  \sum_{r=0}^{n-1} \prod_{j=0}^r \frac{\mathbf{d}_j}{\mathbf{b}_j}+
\sum_{r=0}^{n-2} \frac{1}{\mathbf{b}_r+\mathbf{d}_r} \sum_{j=r+1}^{n-1}\prod_{i=r}^j \frac{\mathbf{d}_i}{\mathbf{b}_i}\,.
\end{equation}
We invoke now that $\mathbf{b}_r=\ell N b(\frac{r}{\ell N})$, $\mathbf{d}_r=\ell N \Vert d \Vert_\infty$. Notice also that
$$
\int_1^\infty \frac{ds}{b(x)}\;<\;\infty\,.
$$
If $d$ is bounded (or linear), then $\mathbf b_r/\mathbf d_r \to 0$ as $r \to
\infty$ and hence the three terms in \eqref{eq_34} are finite as $n \to \infty$.
 The first one behaves as $\int 1/b$, each term of the sum in the second term
can be bounded by $Ce^{-r}$ and using these two facts, we bound the third term.
We have proved Proposition \ref{bu.prob.1} for $d$ bounded. Observe that if $d$
is not bounded but linear, equation \eqref{eq28} takes the form
\[
\sum_{k\in \T_N} \ell d\Big(\frac{\eta_k}{\ell}\Big) \; = d(|\eta|)\, =:
q(|\eta|,|\eta|-1)\,.
\]
Hence we don't need the thinnings to couple $Y$ and $\eta$. Both processes
move jointly to the left always. The rest of the proof follows along the same
steps.

The proof of Theorem \ref{t2.2} is based on a more delicate analysis of \eqref{eq_34}, but we first need some knowledge on the solutions of \eqref{PDE}. That is the context of the following theorem.

\begin{theorem}\label{explosion_PDE} Assume $f$ is nonnegative and continuously differentiable. 
\begin{enumerate}

\item There exists a time $\Tm>0$ (possibly infinite)  such that
there exists a unique maximal solution to \eqref{PDE} in $[0,\Tm)$. 

\item If $\Tm<+\infty$ we have
\[
 \lim_{t\nearrow \Tm} \|u(t,\cdot)\|_{L^\infty(\T)}  = +\infty\,.
\]
\item If $f$ is convex and positive for $r\ge r_0$ and $\varphi(x) \ge
r_0$ for all $x \in \T$, then $u$ blows up in finite time $\Tm$ and moreover the
following estimate holds
\[
 \Tm\le \int_{\|\varphi\|_{L^1(\T)}}^\infty \frac{1}{f(s)}\,ds\,.
\]
\item If $f(u)=u^p$ with $1< p\le 3$, then 
\[
   \lim_{t\nearrow \Tm} \|u(t,\cdot)\|_{L^1(\T)} = +\infty\,.
\]
\end{enumerate}
\end{theorem}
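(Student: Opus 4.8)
The plan is to treat the four items in order, since each uses only standard PDE/ODE machinery together with the comparison principle quoted in Section \ref{s.3}. For item (1), I would invoke the classical local well-posedness theory for semilinear heat equations: since $f\in C^1$ and $\varphi\in C^4$, a contraction-mapping / variation-of-constants argument in $C(\T)$ (using the heat semigroup on the torus) gives a unique local solution, and a standard continuation argument produces a maximal existence time $\Tm\in(0,+\infty]$. For item (2), the contrapositive is the usual one: if $\limsup_{t\nearrow\Tm}\|u(\cdot,t)\|_\infty<\infty$ with $\Tm<\infty$, then the local existence time is bounded below uniformly in terms of the $L^\infty$ bound, so the solution can be continued past $\Tm$, contradicting maximality; one must also use the smoothing of the heat semigroup to upgrade the $L^\infty$ bound to the regularity needed to restart. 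So far this is textbook material (e.g.\ \cite{QS}).

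For item (3), I would integrate the equation in space over $\T$. Writing $m(t)=\|u(\cdot,t)\|_{L^1(\T)}=\int_\T u(x,t)\,dx$ (the $L^1$ norm equals the integral because $u\ge 0$ by the comparison principle and $f(0)\ge 0$), the diffusion term integrates to zero on the torus, so
\[
 m'(t)\;=\;\int_\T f(u(x,t))\,dx\;\ge\; f\!\Big(\int_\T u(x,t)\,dx\Big)\;=\;f(m(t)),
\]
by Jensen's inequality, using that $f$ is convex on $[r_0,\infty)$ and that $u(\cdot,t)\ge r_0$ for all $t$ — the latter itself follows from the comparison principle, since the constant $r_0$ is a subsolution when $f(r_0)\ge 0$ and $\varphi\ge r_0$. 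Now $m$ satisfies the ODE differential inequality $m'\ge f(m)$ with $f>0$ on $[r_0,\infty)$ and $\int^\infty 1/f<\infty$; comparing with the solution of $\dot M=f(M)$, $M(0)=m(0)$, whose life span is exactly $\int_{m(0)}^\infty ds/f(s)$, forces $m(t)$ — and hence $\|u(\cdot,t)\|_\infty\ge m(t)$ — to blow up no later than that time, which combined with (2) gives the stated bound on $\Tm$.

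For item (4) — the $L^1$ norm itself blows up when $f(u)=u^p$ with $1<p\le 3$ — the same identity $m'(t)=\int_\T u^p\,dx$ holds, but now I want a \emph{reverse} control: an upper bound of $\int_\T u^p$ by a power of $m=\int_\T u$. This is where the restriction $p\le 3$ enters and is the main obstacle. The idea is to bootstrap regularity estimates on $u$ near $\Tm$. Assuming for contradiction that $m(t)$ stays bounded up to $\Tm$ while $\|u(\cdot,t)\|_\infty\to\infty$, one derives an energy/$L^q$ estimate: multiply the equation by a suitable power of $u$ and integrate, using the good sign of the term $\int u^{q-1}u_{xx}=-(q-1)\int u^{q-2}u_x^2$ and the Gagliardo--Nirenberg inequality in one dimension to interpolate $\|u\|_\infty$ (or higher $L^q$ norms) between $\|u_x\|_2$-type quantities and $\|u\|_1$. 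The one-dimensional Sobolev/Gagliardo--Nirenberg exponents are precisely favorable when $p\le 3$: the nonlinearity $u^p$ can be absorbed by the dissipation as long as $p$ does not exceed the relevant critical exponent (here $3$, essentially the Sobolev-subcritical threshold for this energy in dimension one). Making the bootstrap rigorous — tracking the constants so that boundedness of $\|u\|_{L^1}$ propagates to boundedness of $\|u\|_{L^\infty}$ up to $\Tm$, contradicting (2) — is the technical heart of the argument; I would carry it out via a Grönwall/differential-inequality argument on $\int_\T u^q\,dx$ for a conveniently chosen $q$, or alternatively by a Moser-type iteration, and I expect the condition $p\le 3$ to appear exactly at the step where the Gagliardo--Nirenberg exponent must be $\le 1$ for the absorption to close.
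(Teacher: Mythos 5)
Items (1)--(3) of your proposal follow the same route as the paper (local well-posedness by variation of constants, continuation for (2), and for (3) the mass functional $\Phi(t)=\int_\T u$, Jensen's inequality and ODE comparison); your justification of $u\ge r_0$ via the constant subsolution $r_0$ is fine since $f(r_0)>0$.

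The problem is item (4), and the gap is exactly at the critical exponent $p=3$, which the theorem includes. Your plan is: if $\|u(\cdot,t)\|_{L^1}$ stayed bounded up to $\Tm$, an $L^q$ energy estimate plus Gagliardo--Nirenberg would bound $\|u(\cdot,t)\|_{L^\infty}$, contradicting (2). Scaling shows when this can close: testing with $u^{q-1}$ gives
\[
\frac{d}{dt}\int_\T u^q \;=\; -\,\frac{4(q-1)}{q}\int_\T \bigl((u^{q/2})_x\bigr)^2 \;+\; q\int_\T u^{p+q-1},
\]
and the interpolation $\int u^{p+q-1}\lesssim \bigl(\int ((u^{q/2})_x)^2\bigr)^{a}\,\bigl(\int u\bigr)^{b}$ (plus lower-order terms) forces, by dimensional analysis in one dimension, $a=(p+q-2)/(q+1)$. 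Absorption into the dissipation via Young's inequality requires $a<1$, i.e.\ $p<3$. At $p=3$ one gets $a=1$ with $b=2$, so the best one can write is $\int u^{q+2}\le C\,\|u\|_{L^1}^2\int ((u^{q/2})_x)^2$ with a fixed constant $C$, and this can only be absorbed if $\|u\|_{L^1}$ is \emph{small}, not merely bounded. This is not a technicality: $L^1$ is exactly the critical space $L^{q_c}$, $q_c=n(p-1)/2$, for $n=1$, $p=3$, and the statement ``bounded critical norm implies no blow-up'' is a genuinely hard result that does not follow from a Gr\"onwall/Moser bootstrap. So your argument proves (4) only for $1<p<3$ and leaves the endpoint $p=3$ open, where the theorem's conclusion is borderline (the limiting singular profile is $|x-\bar x|^{-1}$, just barely non-integrable).

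For comparison, the paper does not argue by continuation at all: it invokes Vel\'azquez's dichotomy for the final-time profile of one-dimensional solutions of $u_t=u_{xx}+u^p$, namely that near any blow-up point $\bar x$ either $u(x,\Tm)\sim c\,\bigl(|x-\bar x|^2/|\log|x-\bar x||\bigr)^{-1/(p-1)}$ or $u(x,\Tm)\sim c\,|x-\bar x|^{-m/(p-1)}$ with $m\ge 4$, together with the Chen--Matano theorem that the blow-up set is finite for non-constant data. Since these profiles fail to be locally integrable precisely when $2/(p-1)\ge 1$, i.e.\ $p\le 3$, Fatou's lemma gives $\liminf_{t\nearrow\Tm}\|u(\cdot,t)\|_{L^1(\T)}\ge\|u(\cdot,\Tm)\|_{L^1(\T)}=+\infty$, which covers the critical case $p=3$ that your bootstrap cannot reach. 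If you want to keep your approach, you would have to either restrict to $p<3$ or import a critical-norm blow-up result; otherwise the profile-based argument is the one that actually yields the stated range $1<p\le 3$.
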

\begin{proof}
Existence and uniqueness of a smooth classical (maximal) solution up to time
$\Tm$ is proved in \cite{Weissler}. See also \cite{Chen, Pao, QS, SGKM}.

To prove (2) consider $y$, the solution of the ODE $y'=f(y)$,
$y(0)=\|\varphi\|_{L^\infty(\T)}$. By comparison arguments, we get $u(x,t)\le
y(t)$ for every $t$ and hence the blow-up time $\Tm=\Tm(\varphi)$ is bigger than
the one for $y$. So, assume
\[
 \limsup_{t\nearrow \Tm} \|u(t,\cdot)\|_{L^\infty(\T)} \le C\,.
\]
Then, there is an increasing sequence of times $(t_k)$ such that $t_k \nearrow
\Tm$ and such that the solution to \eqref{PDE} with initial data $u(\cdot, t_k)$
is defined in some interval $[0,\tilde T]$, where $\tilde T$ depends only on
$C$ (and not on $k$). Then we can extend $u$ for times $t>\Tm$ which contradicts
(1).

For (3) consider $\Phi(t):=\|u(\cdot, t)\|_{L^1(\T)}$. Observe that
$v(x,t)=\varphi(x)$ verifies $v_t \le v_{xx} + f(v)$  and then the solution $u$
with initial data $\varphi$ verifies $u(x,t)\ge v(t,x)= \varphi(x)$ for every
$x\in \T$, $t\ge 0$. Differentiating and using Jensen's
inequality we get
\[
 \frac{d}{dt}\Phi(t) = \int_{\T} f(u(x,t))\, dx \ge f(\Phi(t))\,, \qquad
\Phi(0)={\|\varphi\|_{L^1(\T)}}\,.
\]
Thus, $\Phi(t) \ge z(t)$, the solution of the ODE $z'=f(z),
z(0)=\|\varphi\|_{L^1(\T)}$. Integrating this equation we obtain
\[
\int_{z(0)}^{z(t)} \frac{1}{f(s)}\,ds \ge t\,.
\]
Let $T_z$ be the maximal existence time for $z$. Taking the limit $t\nearrow T_z$,
we first observe that $T_z <\infty$ and $z(T_z)=+\infty$, and next
\[
 \Tm \le T_z \le \int_{\|\varphi\|_{L^1(\T)}}^\infty \frac{1}{f(s)}\,ds\,.
\]
For (4) we use the following dichotomy proved by Vel\'azquez
\cite{Velazquez}. Assume that $u(x,t)$ is a positive solution to $u_t=u_{xx} +
u^p$ for $x\in (-R,R)$ and $t \in (0,T)$ which blows up at $t=T$. Assume also
that its blow-up set 
\[
B=\{\bar x \in (-R,R) \colon \limsup_{t\nearrow T}
\|u(t,\cdot)\|_{L^\infty(\T)}  = +\infty\}
\]
is contained in $[-\delta, \delta]$
for some $\delta <R$. Then $B$ is isolated and for any blow-up point $\bar x \in
B$ one of the following holds.
\[
\begin{split}
& \lim_{x\to \bar x} \left(\frac{|x-\bar x|^2}{|\log|x-\bar
x||} \right)^{\frac{1}{p-1}}u(x,\Tm) = \left(
\frac{8p}{(p-1)^2}\right)^\frac{1}{p-1},\\
&\lim_{x\nearrow\bar x}|x-\bar x|^{\frac{m}{p-1}}u(x,\Tm)=((p-1)
C)^{-\frac{1}{p-1}}\,,
\end{split}
\]
where $C$ and $m$ are positive constants with $m\ge 4$. Also Chen and Matano
proved \cite{Chen-Matano} that if the initial data $\varphi$ is non-constant,
the number of blow-up points of \eqref{PDE} is finite (moreover, it does not
exceed the number of local maximum of $\varphi$) and hence we can
apply the above dichotomy in a neighborhood of a blow-up point. Fatou's lemma and straightforward
computations leads to $ \liminf	_{t\nearrow T} \|u(\cdot,t)\|_{L^1(\T)} \ge
\|u(\cdot,T)\|_{L^1(\T)} = +\infty$ if $1<p\le 3$ for any of the alternatives.
If $\varphi$ is constant, the conclusion is immediate and holds for every $p>1$.
\end{proof}

\begin{proof}[Proof of Theorem \ref{t2.2}]
If $\Tm = \infty$, Corollary \ref{cor.liminf} implies the theorem. We observe
that this only happens if $f$ has a root at $r_0$ and $\varphi \equiv
r_0$. For $\Tm<\infty$, we assume $d$ is bounded (if $d$ is not bounded but
linear, the proof is similar). From \eqref{eq_34} we get
\begin{equation}\label{bound.bu}
\begin{split}
 \sum_{r=y\ell N}^n f_r\;= & \; \sum_{r=y\ell N}^{n-1}\frac{1}{\mathbf{b}_r+\mathbf{d}_r}+ f_0  \sum_{r=y\ell N}^{n-1} \prod_{j=0}^r \frac{\mathbf{d}_j}{\mathbf{b}_j}+
\sum_{r=y\ell N}^{n-2} \frac{1}{\mathbf{b}_r+\mathbf{d}_r} \sum_{j=r+1}^{n-1}\prod_{i=r}^j \frac{\mathbf{d}_i}{\mathbf{b}_i}\\
\le \;  & \frac{1}{\ell N}\sum_{r=y\ell N}^{n-1}\frac{1}{b(r/\ell N)} + f_0 C  \sum_{r=y\ell N}^{n-1} e^{-r}+
\frac{1}{\ell N}\sum_{r=y\ell N}^{n-2} \frac{C}{b(r/\ell N)} \sum_{j=r+1}^{n-1}e^{-(j-r)}\\
\le \; & \int_{y}^{\infty}\frac{1}{b(s)} \, ds + C' e^{- \frac{y\ell N}{2}}+ C''\int_{y}^{\infty}\frac{1}{b(s)} \, ds\,.
\end{split}
\end{equation}
Since there exists $r_0$ such that $\mathbf{d}_r/\mathbf{b}_r < e^{-1}$ for $r
\ge r_0$ and the indexes in the sums start at $r=y\ell N$, the constants $C,\,
C', \, C''$ are independent of $\ell$ and $N$.

% . For the reverse inequality we restrict ourselves to the case  $b(s)=s^p-d(s)$, $1<p\leq 3$.
% Let $(Y_t)$ be a one-dimensional birth-and-death process on $\bb N$ with jump rates $q(i,j)$ given by 
% \[q(i,j)=
% \begin{cases}
% \ell N b\Big(\frac{i}{\ell N}\Big), &  i\ge 0, \, j=i+1,\\
% \ell N \Vert d\Vert_\infty, &  i>0, \, j=i-1,\\
% 0 & \mbox{otherwise}
% \end{cases}
% \]
Fix  $\delta>0$ and choose $M$ large enough in order to guarantee that
\[
 \int_{M}^{\infty}\frac{1}{b(s)} \, ds + C' e^{- \frac{M}{2}}+ C''\int_{M}^{\infty}\frac{1}{b(s)} \, ds <\delta\,.
\]
Observe that $M$ does not depend on $N$. By  Theorem \ref{explosion_PDE},  there exists a time
$T<T_{\textrm{max}}$ such that
$$\Vert u(\cdot,T)\Vert_{L^1(\T)}\geq M+1\,.$$
Take $\eps=1$ and apply Theorem \ref{t2.1}, to get the existence of a finite random $N_0\in \bb N$ such that, for $N\geq N_0$, 
$\|X^N(T,\cdot)-u(T,\cdot)\|_\infty < 1$, which implies $\|X^N(T,\cdot)\|_{L^1(\T)}>M$. Hence,
$$\sum_{k\in \T_N} \eta_k(T)\geq M\ell N \,.$$
For each $N$ and from time $T$ on we construct the process $Y$, with initial data $Y(T)=M \ell N$ jointly with $\eta(t)$. Using that the expected time for the explosion time of the process $Y$ is given by
\[
T +  \sum_{r=M\ell N}^\infty f_r
\]
and \eqref{bound.bu} we  get that the the explosion time of $Y$, $T_Y$, has
expectation bounded by $T + \delta$ and that for $N\ge N_0$, this time bounds
from above the explosion time $\Tm^N$ of $\eta$. Hence, for $\gamma >0$ we have
\[
\begin{split}
 \P(\Tm^N > \Tm + \gamma) \le &   \,\P(\Tm^N > \Tm + \gamma, N > N_0) + 
\P(\Tm^N > \Tm + \gamma, N \le N_0)\\
 \le & \, \P(T_Y > T + \gamma) + \P(N\le N_0)\,.
\end{split}
\]
Hence, the finiteness of $N_0$ and Markov inequality gives us
\[
 \limsup_{N\to \infty} \P(\Tm^N > \Tm + \gamma) \le \frac{\delta}{\gamma}\,.
\]
Since $\delta$ is arbitrary and Corollary \ref{cor.liminf} implies the
reversed inequality, the proof is completed.

\end{proof}

% \section{Open questions}
% \patu{No me convence mucho esta seccion...}
% We end this paper with some questions that we are not able to answer yet.
% \begin{enumerate}
% \item[(i)] As a function of $b$ and $d$, how many sites do explode in the
% particle system? Taking $b(x)=x^{2+\eps}$, $d(x)=x$ and using 
% the Subsection \ref{BD},
% it is possible to prove that, almost surely,
% only one site explodes. The authors expect that the numbers of sites that
% explode should be a deterministic function of $b$ and $d$. \patu{Podemos de
% verdad??}  
% \item[(ii)] In the case of blow-up of a single site just as in the item
% before, the position of this site does converge to the macroscopical point
% where the PDE's solution explodes in the continuum?
% \item[(iii)] Considering the Zero Range process instead of independent random
% walks, is it true the convergence of Theorem \ref{t2.1}? We notice that the
% method here presented does not apply due the inexistence of a suitable
% non-linear Duhamel's Principle.
% \end{enumerate}

\appendix

\section*{Acknowledgements}
We want to thank Pablo Ferrari, Milton Jara and Mariela Sued for fruitful
discussions.

PG is partially supported by UBACyT 20020090100208, ANPCyT PICT No.
2008-0315 and CONICET PIP 2010-0142 and 2009-0613.
\bibliography{bibliografia}
\bibliographystyle{plain}

\end{document}